\documentclass[runningheads]{llncs}
\usepackage[linesnumbered,commentsnumbered,ruled,vlined]{algorithm2e}

\usepackage{verbatim}
\usepackage{blkarray}

\usepackage[T1]{fontenc}
\usepackage{graphicx}
\usepackage{amsmath}
\usepackage{amssymb}
\usepackage{tikz}
\usetikzlibrary{trees, positioning}
 \usepackage{multirow}

 \newcommand{\x}{\textbf{x}}
 \newcommand{\p}{\textbf{p}}
 \newcommand{\z}{\textbf{z}}

\begin{document}

\title{Low-Memory Numerical Certification}

\titlerunning{Low-Memory Numerical Certification}

 \author{Paul Breiding\inst{1}\orcidID{0000-0003-3747-9185} \and
 Taylor Brysiewicz \inst{2}\orcidID{0000-0003-4272-5934} \and
 David K. Johnson\inst{2}\orcidID{0009-0007-9017-6825}}

\authorrunning{P. Breiding et al.}

\institute{University of Osnabr\"uck, Osnabr\"uck, Lower Saxony, Germany\and 
            Western University, London, Ontario, Canada }

\maketitle             

\vspace{-10pt}
\begin{abstract}
We introduce a low-memory framework for certifying numerical solutions to polynomial systems which uses solution iterators and spatial partitioning trees to reduce memory requirements.  We provide a prototypical algorithm, analyze its complexity, and demonstrate the memory reduction on a large example. 
\end{abstract}

\vspace{-28pt}
\section{Introduction}

Certification methods transform numerical solutions to polynomial systems into rigorously proven mathematical statements. The two most popular methods for performing such symbolic-numeric alchemy 
 are Smale's $\alpha$-theory \cite{blumealpha,HSalpha,smalealpha} and the Krawczyk method \cite{JuliaCertification,Krawczyk,M2Certification}. Each enjoys $O(d)$ space and time complexity to certify $d$ candidate solutions to a system $F$, and the Krawczyk method has been parallelized in \cite{HC}. However, the existing implementations require simultaneously holding all solutions in memory, a bottleneck for $d \gg 0$. Our method circumvents this memory issue by partitioning the solution space $\mathbb{C}^n$ into parts and certifying the candidates, represented by a solution iterator \cite{HomotopyIterators,MonodromyCoordinates},  
  in each part.
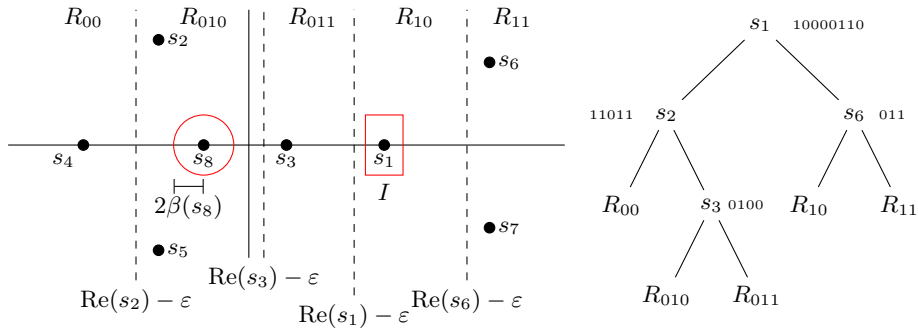
\begin{figure}[!htpb]
\begin{center}
\begin{tikzpicture}[
    every node/.style={font=\small},
    level 1/.style={sibling distance=2.5cm, level distance=1.2cm},
    level 2/.style={sibling distance=1.2cm, level distance=1.2cm},
    level 3/.style={sibling distance=1.2cm, level distance=1.2cm},
]

\begin{scope}[xshift=-5cm]

  \draw[-] (-3.2,0) -- (4.2,0);
  \draw[-] (0,-1.5) -- (0,1.8);

  \draw[dashed] (-1.5,-1.8) node[below] {$\textrm{Re}(s_2)-\varepsilon$} -- (-1.5, 1.8) ;
  \draw[dashed] ( 0.2,-1.5) node[below] {$\textrm{Re}(s_3)-\varepsilon$} -- ( 0.2, 1.8) ;
  \draw[dashed] ( 1.4,-2.) node[below] {$\textrm{Re}(s_1)-\varepsilon$} -- ( 1.4, 1.8) ;
  \draw[dashed] ( 2.9,-1.8) node[below] {$\textrm{Re}(s_6)-\varepsilon$} -- ( 2.9, 1.8) ;

  \node at (-2.2, 1.7) {$R_{00}$};
  \node at (-0.6, 1.7) {$R_{010}$};
  \node at ( 0.85, 1.7) {$R_{011}$};
  \node at ( 2.2,  1.7) {$R_{10}$};
  \node at ( 3.5,  1.7) {$R_{11}$};

  \filldraw (-2.2,  0  ) circle (2pt) node[below left]  {$s_4$};
  \filldraw (-1.2,  1.4) circle (2pt) node[right]       {$s_2$};
  \filldraw (-1.2, -1.4) circle (2pt) node[right]       {$s_5$};
  \filldraw (-0.6,  0  ) circle (2pt) node[below]       {$s_8$};
  \filldraw ( 0.5,  0  ) circle (2pt) node[below]       {$s_3$};
  \filldraw ( 1.8,  0  ) circle (2pt) node[below]       {$s_1$};
  \filldraw ( 3.2,  1.1) circle (2pt) node[right]       {$s_6$};
  \filldraw ( 3.2, -1.1) circle (2pt) node[right]       {$s_7$};
  
  \draw[red] (-0.6, 0) circle (0.4cm);
  \draw[|-|] (-1.0,-0.55) -- (-0.6,-0.55)
    node[midway, below, font=\small] {$2\beta(s_8)$};

  \draw[red] (1.8-0.25, -0.4) rectangle (1.8+0.25, 0.4);
  \node[black, below=0.4cm] at (1.8, 0) {$I$};

\end{scope}

\begin{scope}[xshift=1.8cm, yshift=1.6cm]

 \node (r1) {$s_1$}
    child { node (r2) {$s_2$}
      child { node {$R_{00}$} }
      child { node (r4) {$s_3$}
        child { node {$R_{010}$} }
        child { node {$R_{011}$} }
      }
    }
    child { node (r3) {$s_6$}
      child { node {$R_{10}$} }
      child { node {$R_{11}$} }
    };

  \node[right=0.05cm of r1, font=\ttfamily\tiny] {$10000110$};
  \node[left=0.05cm of r2, font=\ttfamily\tiny] {\hspace{6pt}$11011$};
  \node[right=0.05cm of r3, font=\ttfamily\tiny] {\hspace{-3pt}$011$};
  \node[right=0.05cm of r4, font=\ttfamily\tiny] { \hspace{-6pt}$0100$};

\end{scope}

\end{tikzpicture}
\end{center}
\caption{
(Left) A partition $\bigsqcup_{\ell \in \textrm{leaves}(T)} R_\ell$ of $\mathbb{C}$ with candidate solutions $S=\{s_i\}_{i=1}^8$ of a real polynomial system. The red regions indicate the certified regions of the Krawczyk method and $\alpha$-certification for $s_1$ and $s_8$ respectively. 
(Right) A binary tree encoding the partition. Each node $\nu$ represents a region $R_{\nu}$, which is then refined  into two smaller regions by the outcomes of  $\textrm{Re}(\z)<\textrm{Re}(s_i)-\varepsilon$ for \textit{split} $s_i$ decorating $\nu$. The subset of these outcomes on   $R_{\nu} \cap S$ are cached in the corresponding bitvector. 
}
\label{fig:partition1}
\end{figure}

By design, our algorithm can be applied to the datatype of a \textit{solution iterator}. A solution iterator represents a solution set $S \subseteq \mathbb{C}^n$ by holding a single solution in memory and promising the ability to obtain the \texttt{next} solution. Introduced in \cite{MonodromyCoordinates} and expanded upon in \cite{HomotopyIterators}, solution iterators reduce the memory complexity of many existing algorithms through their lazy-evaluation approach.
In the present work, we extend the  applicability  of solution iterators to certification techniques. 
 
After fixing some number $k \in \mathbb{N}$, our main idea is to partition $S$ into parts of size no larger than $k$ and then collect and certify each of these $m$ parts, one at a time. Practically, this approach requires representing a partition of $S$ in (1) a computationally efficient manner such that (2) one may certify there is no overlap between the certified regions in distinct parts. A \textit{binary spatial partitioning} (BSP) tree, as shown in Figure \ref{fig:partition1} (right), addresses both concerns. 

Algorithm \ref{alg:buildTree} builds a BSP tree from a solution iterator which is, heuristically, balanced; in particular  $m\approx d/k$. The user may  sacrifice some storage to make this tree much more time-efficient via \textit{bitmasks}.  Algorithm \ref{alg:iteratorcertify} takes the tree along with the solution iterator and certifies all parts. Table \ref{tab:complexitysummary} summarizes the complexity of our main algorithm, Algorithm \ref{alg:mainalgorithm}, which combines these subroutines.

\vspace{-10pt}
\begin{table}
\begin{center}
\begin{tabular}{{|l||l||c|c|c|c|}}
\hline
 & \quad \quad \quad \quad \, Space & \multicolumn{3}{c|}{Time in \# evaluations }\\ \hline \hline
Bitmasks &  & Bit lookups & \,\,\,$<$ \,\,\, & \texttt{next}   \\ \hline 
Krawczyk & $d\log_2(m) + 384nk$ 
& \multirow{2}{*}{\shortstack{$O(dm\log(m)$\\$+\,d\log(m)^3)$}} 
& \multirow{2}{*}{$O(d\log(m))$} 
& \multirow{2}{*}{$O(d\log(m))$}\\
$\alpha$-certification & $d\log_2(m) + 384nk+128k$ & & & \\
\hline
\hline
No Bitmasks & &Bit lookups & $<$  & \texttt{next}  \\ \hline \hline
Krawczyk & $64(m-1) + 384nk$ 
&\multirow{2}{*}{$0$}  
&\multirow{2}{*}{$O(dm\log(m))$}  
&\multirow{2}{*}{$O(dm)$} \\
$\alpha$-certification & $64(m-1) + 384nk+128k$ & & & \\
\hline
\end{tabular}
\end{center}
\caption{Space and time complexity summary of our main algorithm, Algorithm \ref{alg:mainalgorithm}.}
\label{tab:complexitysummary}
\end{table}

\vspace{-35pt}

\section{Certification background}
Consider a polynomial system
\[
F: f_1(\x) = \cdots = f_n(\x) = 0 \quad \text{ where } \quad f_1,\ldots,f_n \in \mathbb{Q}[x_1,\ldots,x_n] = \mathbb{Q}[\x] 
\]
and a set 
$S = \{s_1,\ldots,s_d\} \subseteq \mathbb{C}^n$
of floating point approximations of roots of $F$ called \textit{candidates}. 
There are two main algorithms which convert $S$ into some mathematical result: Smale's $\alpha$-theory and the Krawczyk method. Both methods deduce facts about the Newton operator
$
N_F(\x) = \x-\textrm{Jac}_F(\x)^{-1} \cdot F(\x).
$

\paragraph{Certification by $\alpha$-theory}~

\noindent In \textit{Smale's $\alpha$-theory}, one certifies a candidate $s \in \mathbb{C}^n$ by first computing some constants $\alpha(F,s), \beta(F,s)$ and $\gamma(F,s)$  (see \cite{HSalpha} for details).  The foundational result comes from \cite[Page 160]{blumealpha}. 
\begin{theorem}\label{thm_alpha}
If $\alpha(F,s)<\frac{13-3\sqrt{17}}{4} \approx 0.157971$ then $s$ converges quadratically to a true root $z$ of $F$ under repeated application of Newton's method. Moreover, the true root $z$ is at most $2\beta(F,s)$ from $s$. 
\end{theorem}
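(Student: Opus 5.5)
We plan to follow the classical majorant-series proof of Smale's $\alpha$-theorem, as in \cite{blumealpha}. Recall $\beta(F,s)=\|\textrm{Jac}_F(s)^{-1}F(s)\|$, $\gamma(F,s)=\sup_{k\ge 2}\|\textrm{Jac}_F(s)^{-1}D^kF(s)/k!\|^{1/(k-1)}$ and $\alpha=\beta\gamma$. The approach compares the Newton iteration for $F$ at $s$ with the Newton iteration for the univariate majorant
\[
h(t)=\beta - t + \frac{\gamma t^2}{1-\gamma t},
\]
which is the worst case among all analytic functions with the given $\beta,\gamma$.

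First we establish two standard lemmas. The first is a perturbation lemma for the inverse Jacobian: if $u=\|y-s\|\gamma<1-\tfrac{\sqrt2}{2}$, then $\textrm{Jac}_F(y)$ is invertible, with
\[
\|\textrm{Jac}_F(y)^{-1}\textrm{Jac}_F(s)\|\le \frac{(1-u)^2}{\psi(u)},\qquad \psi(u)=2u^2-4u+1 .
\]
This follows from Taylor expanding $\textrm{Jac}_F(y)$ around $s$ and summing a Neumann series. The second lemma gives update bounds for $\beta$ and $\gamma$ at the moved point, namely $\gamma(y)\le \gamma(s)/\bigl((1-u)\psi(u)\bigr)$ and an analogous bound on $\beta(y)$. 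Both are obtained by bounding the Taylor coefficients of $F$ at $y$ in terms of those at $s$, using $\sum_k \binom{k}{j}u^{k-j}$ identities.

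Next we analyze $h$. Its smaller root $t^*$ is real exactly when the discriminant $(1+\alpha)^2-8\alpha\ge0$, that is, when $\alpha\le 3-2\sqrt2$. Newton's method from $t_0=0$ then increases monotonically to $t^*$. The key step is an induction showing $\|x_{k+1}-x_k\|\le t_{k+1}-t_k$, where $x_k$ are the Newton iterates of $F$ from $s$. This uses the two lemmas, with $h$ dominating every quantity. Consequently $(x_k)$ is Cauchy, its limit $z$ satisfies $F(z)=0$, and
\[
\|z-s\|\le t^*=\frac{1+\alpha-\sqrt{(1+\alpha)^2-8\alpha}}{4\gamma}.
\]
Rationalizing gives $t^*\le 2\beta$ once $\alpha$ is small enough: since $t^*=\beta\cdot 4/\bigl(1+\alpha+\sqrt{\cdot}\bigr)$, it suffices that $1+\alpha+\sqrt{\cdot}\ge2$.

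Quadratic convergence additionally requires the sharper threshold $\frac{13-3\sqrt{17}}{4}$. This threshold is the root of $\alpha^2-13\alpha/2+1/4$-type equations arising from requiring that the contraction factor of $h$'s Newton map satisfy $\|x_k-z\|\le (1/2)^{2^k-1}\|s-z\|$. The main obstacle is this last computation: one must show that $\gamma(z)\|s-z\|$ lies below the radius $\frac{3-\sqrt7}{2}$ of the $\gamma$-theorem, so that every point in the ball converges quadratically. We would first try bounding $\|s-z\|\gamma(z)$ via the $\gamma$-update lemma in terms of $\alpha$, and then solve for the $\alpha$ at which this equals $\frac{3-\sqrt7}{2}$. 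We expect this to produce exactly $\frac{13-3\sqrt{17}}{4}$.
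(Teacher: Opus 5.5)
\textbf{The threshold step has a genuine gap.} The paper does not prove this statement; it quotes it from Blum--Cucker--Shub--Smale, so your proposal has to stand on its own. The first part is the standard argument and is fine: the majorant $h$, the inverse-Jacobian and $\gamma$-update lemmas, the induction $\|x_{k+1}-x_k\|\le t_{k+1}-t_k$, and convergence to a zero $z$ with $\|z-s\|\le t^*$.

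The gap is the step meant to produce the threshold. Routing quadratic convergence through the $\gamma$-theorem at $z$ cannot give $\frac{13-3\sqrt{17}}{4}$. Write $\Delta=(1+\alpha)^2-8\alpha$ and $u=\gamma\|s-z\|\le\gamma t^*=\frac{1+\alpha-\sqrt{\Delta}}{4}$. Your bound $\gamma(z)\|s-z\|\le\frac{u}{(1-u)\psi(u)}$ is at most $\frac{3-\sqrt7}{2}$ only when $u\le 0.0993$, i.e.\ roughly $\alpha\le 0.088$. At $\alpha=\frac{13-3\sqrt{17}}{4}$ one gets $\gamma t^*\approx 0.219$ and $\frac{u}{(1-u)\psi(u)}\approx 1.28$.

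This loss is intrinsic, not slack in your estimates. Take the scalar function $h$ itself, with $s=0$ and $z=t^*$. Then $\|s-z\|=t^*$ and $\gamma(h,t^*)=\frac{\gamma}{(1-v)\psi(v)}$ with $v=\gamma t^*$ hold with equality. So for $0.088<\alpha\le\frac{13-3\sqrt{17}}{4}$ the point $0$ is an approximate zero of $h$ that lies outside the $\gamma$-theorem ball around $t^*$.

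\textbf{The missing idea, and two slips to fix.} The rate must be read off from the majorant sequence itself; you gesture at this with ``the contraction factor of $h$'s Newton map'' but do not use it. Newton's method for $h$ from $t_0=0$ has the closed form $t_k=t^*\frac{1-q^{2^k-1}}{1-\eta q^{2^k-1}}$, where $\eta=t^*/t^{**}$ and $q=\frac{1-\alpha-\sqrt{\Delta}}{1-\alpha+\sqrt{\Delta}}\le\eta$. This gives $t_{k+1}-t_k\le q^{2^k-1}\beta$ and $t^*-t_k\le q^{2^k-1}t^*$. Moreover $q\le\frac12$ iff $1-\alpha\le 3\sqrt{\Delta}$ iff $2\alpha^2-13\alpha+2\ge 0$, i.e.\ iff $\alpha\le\frac{13-3\sqrt{17}}{4}$. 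So the relevant quadratic is $2\alpha^2-13\alpha+2$, not $\alpha^2-\frac{13}{2}\alpha+\frac14$.

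Combined with your induction, this yields $\|x_{k+1}-x_k\|\le(\frac12)^{2^k-1}\|x_1-x_0\|$. If you want the form $\|x_k-z\|\le(\frac12)^{2^k-1}\|s-z\|$, add the majorant error estimate $\|x_{k+1}-z\|\le\frac{t^*-t_{k+1}}{(t^*-t_k)^2}\|x_k-z\|^2$. It follows by expanding $F(z)$ in a Taylor series at $x_k$ and using $\|x_k-s\|\le t_k$, $\|x_k-z\|\le t^*-t_k$, and $h^{(j)}\ge 0$ for $j\ge 2$. Iterating gives $\|x_k-z\|\le\frac{t^*-t_k}{t^*}\|s-z\|\le q^{2^k-1}\|s-z\|$.

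The $2\beta$ step also needs repair. The correct formula is $t^*=\frac{2\beta}{1+\alpha+\sqrt{\Delta}}$, not $\frac{4\beta}{1+\alpha+\sqrt{\Delta}}$. Your sufficient condition $1+\alpha+\sqrt{\Delta}\ge 2$ fails for every $\alpha>0$, since it amounts to $\sqrt{1-6\alpha+\alpha^2}\ge 1-\alpha$. With the correct formula you get $t^*\le\frac{2\beta}{1+\alpha}\le 2\beta$ immediately.
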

Practically, one either uses a rational approximation of $s$ to compute exactly, or interval arithmetic methods \cite{AlphaInterval}.

\paragraph{Interval certification}~

\noindent The second certification paradigm is \textit{the Krawczyk method} \cite{Krawczyk} which shows that the Newton operator is a contraction  on an interval box $I \subseteq \mathbb{C}^n \cong \mathbb{R}^{2n}$ containing $s$. It verifies this using interval arithmetic (see \cite{JuliaCertification} for details). Successfully showing that $N_F$ is a contraction on $I$ proves that $I$ contains a unique root by Banach's fixed point theorem \cite{Banach}.
\begin{theorem}\label{thm_interval}
Let $I \subset \mathbb{C}^n \cong \mathbb{R}^{2n}$ be an interval box. If $N_F$ is a contraction on $I$ then $I$ contains a unique root of $F$.
\end{theorem}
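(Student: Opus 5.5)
The plan is to apply Banach's fixed point theorem \cite{Banach} to $N_F$ restricted to $I$. I read ``$N_F$ is a contraction on $I$'' as three conditions. First, $\textrm{Jac}_F(\x)$ is invertible for every $\x \in I$, so that $N_F$ is defined on all of $I$. Second, $N_F(I) \subseteq I$. Third, there is a constant $q<1$ such that
\[
\|N_F(\x)-N_F(\mathbf{y})\| \le q\,\|\x-\mathbf{y}\|
\]
for all $\x,\mathbf{y}\in I$, with respect to some fixed norm on $\mathbb{R}^{2n}$. In the Krawczyk setting this is exactly what the interval computation certifies: the Krawczyk image of $I$ lies inside $I$, and the associated interval matrix has norm less than one \cite{Krawczyk,JuliaCertification}.

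\textbf{Existence.} An interval box $I$ is a product of closed bounded intervals in $\mathbb{R}^{2n}$. It is therefore a closed subset of the complete space $\mathbb{R}^{2n}$, and hence complete with the restricted metric. Since $N_F$ maps $I$ into itself and is a contraction there, Banach's theorem gives a unique fixed point $z\in I$ with $N_F(z)=z$. This means $\textrm{Jac}_F(z)^{-1}F(z)=0$, and because $\textrm{Jac}_F(z)$ is invertible, $F(z)=0$. So $I$ contains a root of $F$.

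\textbf{Uniqueness.} Suppose $z'\in I$ is any root of $F$. Its Jacobian is invertible by the first condition, so $N_F(z') = z' - \textrm{Jac}_F(z')^{-1}\cdot 0 = z'$. Thus $z'$ is a fixed point of $N_F$ in $I$, and Banach's theorem forces $z'=z$. Roots of $F$ in $I$ are therefore exactly the fixed points of $N_F$ in $I$, and there is only one.

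\textbf{Main obstacle.} The delicate step is not the fixed point argument but making precise what ``contraction'' guarantees. In particular, it must include invertibility of the Jacobian throughout $I$ and self-mapping of $I$. If the Krawczyk test is taken as the hypothesis instead, I would first derive these two properties from it via the standard interval mean value argument:
\[
K(I) = s - Y F(s) + \bigl(\mathrm{Id} - Y\,\textrm{Jac}_F(I)\bigr)(I-s) \subseteq \mathrm{int}(I)
\]
implies that $\|\mathrm{Id}-Y\,\textrm{Jac}_F(\x)\|<1$ on $I$. That bound makes every $\textrm{Jac}_F(\x)$ invertible, by a Neumann series argument. The contraction and uniqueness argument can then be run on the simplified Newton map $\x\mapsto \x - YF(\x)$, whose fixed points are also exactly the roots of $F$.
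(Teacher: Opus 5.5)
Your proposal is correct and takes the same route as the paper, which justifies the theorem in one line by Banach's fixed point theorem. You additionally make explicit what that one line leaves implicit: the hypotheses hidden in ``contraction on $I$'' (self-mapping and invertibility of $\textrm{Jac}_F$ throughout $I$), completeness of the closed box, and the fact that fixed points of $N_F$ in $I$ are exactly the roots of $F$ in $I$. You also note, correctly, that the Krawczyk test really certifies a contraction for the simplified operator $\x\mapsto \x - YF(\x)$ rather than for $N_F$ itself, a point the paper glosses over.
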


\paragraph{Parallel certification using distance to a reference point}~

\noindent Both $\alpha$-certification and the Krawczyk method have three core capabilities:

\vspace{2pt}
\begin{tabular}{lll}
\multicolumn{3}{l}{1. Associate to an approximation $s$ a true root $z$ of $F$.}\\[0.25em]
&\; ($\alpha$-theory) $s \to z$ under $N_F$ & \quad \hspace{-10pt} (Krawczyk) $I \to z$ under $N_F$\\[0.25em]
\multicolumn{3}{l}{2. Provide a \textit{certificate region} containing $s$ and the true root $z$ of $F$.}\\[0.25em]
&\; ($\alpha$-theory) $B_{2\beta(F,s)}(s)$ & \quad \hspace{-10pt} (Krawczyk) $I$ \\[0.25em]
\multicolumn{3}{l}{3. Check if two certified approximations $s,s'$ correspond to distinct roots.}\\
&\; ($\alpha$-theory)  $||s-s'|| > 2(\beta(F,s)+\beta(F,s'))$ & \quad\hspace{-10pt} (Krawczyk) $I \cap I' = \emptyset$
\end{tabular}
\vspace{2pt}

\noindent Certifying a single point involves Item $1$ in the above list. The difficulty of parallel or low-memory certification, however, is the need to show that the true roots obtained from Item $1$ are distinct. Doing so involves Item $3$ which, \textit{prima facie}, requires $O(d^2)$ comparisons and obstructs trivial parallelizability. Alternatively, one can compute (in parallel) the distance intervals that the certificate regions, obtained in Item $2$, inhabit with respect to some fixed reference point $\textbf{z} \in \mathbb{C}^n$. Then, one must show only that the candidates whose intervals intersect are distinct, which may be done in parallel. This technique is implemented in \cite{HC}.  However, incorporating it into the lazy-evaluation framework of solution iterators still requires significant memory costs (see Example~\ref{ex:example}).

\section{Binary spatial partitioning trees}
Other than solution iterators, the main datatype involved in our algorithms is that of a \textit{binary spatial partitioning} (BSP) tree. Our version of a BSP tree is simplistic, as we do not require nor benefit from more intricate constructions, such as $kd$-trees \cite[Chapter 5]{deBerg2008}. We seek only to induce a spatial partition on a set $S$ of $d$ solution candidates such that no part has size larger than $k$. 

Define a partial order on $\mathbb{C}^n$ using just the real part of the first coordinate: 
\[
\textbf{z}=(z_1,\ldots,z_n) <(w_1,\ldots,w_n)=\textbf{w} \iff \textrm{Re}(z_1)<\textrm{Re}(w_1).
\]
Note that conjugate points $\z$ and $\overline{\z}$ are incomparable.  For any $\z \in \mathbb{C}^n$ we write 
\[
H_{\z,-} = \{\p \in \mathbb{C}^n \mid \p<\z \} \quad \quad \quad 
H_{\z,+} = \{\p \in \mathbb{C}^n \mid \p\not<\z  \}
\]
for the left open halfspace and right closed halfspace defined by $\z$, which is called the \textit{splitting point} or \textit{split}.  In practice, we shift the split by some tolerance $\varepsilon~>~0$ and define $
\mathcal H_{\z,-}  = H_{\z-\varepsilon \cdot e_1,-}$ and $
\mathcal H_{\z,+}  = H_{\z-\varepsilon \cdot e_1,+}$.
 This is necessary since these halfspaces must be later shown (step \texttt{4} of Algorithm \ref{alg:iteratorcertify}) to contain certificate regions of split points, as shown in the left of  Figure~\ref{fig:partition1}.
 
Let $T$ be a binary tree whose internal nodes represent splits in $\mathbb{C}^n$ so that labels of left (resp. right) descendants of a split $\z$ belong to $\mathcal H_{\z,-}$ (resp. $\mathcal H_{\z,+}$). Every node $\nu$, including the leaves, of such a tree thus represents a region obtained via the intersection of the halfspaces indexed by its split ancestors:
\[
R_{\nu} := \left(\bigcap_{\nu \prec\z} \mathcal H_{\z,-} \right) \cap \left(\bigcap_{\nu \succ\z} \mathcal H_{\z,+} \right).
\]
Here $\prec$ indicates ``left descendant'' and $\succ$ indicates ``right descendant''. Such a tree is called a \textit{binary spatial partitioning} (BSP) tree since the leaf regions of $T$ partition $\mathbb{C}^n$.  
The following algorithmic structure builds a BSP tree iteratively.
\begin{algorithm}\label{alg:buildTree}
\caption{Build BSP Tree}
\SetKwInOut{Input}{Input}
\SetKwInOut{Output}{Output}
\Input{A set $S\subseteq\mathbb{C}^n$ of size $d$\\
	   A number $k \in \mathbb{N}$ bounding the size of each part}
\Output{A BSP tree $T$ whose leaf regions each contain at most $k$ points of $S$\\
}
Choose $\z_1\in \mathbb{C}^n$.\\
Initialize $T$ with root (depth $0$) labeled with the split $\z_1$  \,\,\,\, \quad \quad \quad \quad \quad \quad \,\, (implicitly branching children representing $\mathcal H_{\z_1,-}$ and $\mathcal H_{\z_1,+}$). \\
\While{there exists a leaf $\ell$ of $T$ such that $|R_{\ell}\cap S|\geq k$}{
Choose  $\z\in R_{\ell}$ and assign $\z$ to $\ell$ (implicitly branching children).\\
}
\Return{T}
\end{algorithm}\\
Algorithm \ref{alg:buildTree} needs a method for choosing splits in steps $\texttt{1}$ and $\texttt{4}$. Choices include 
\begin{enumerate}
\item (Median) Choose the median of the real part of the first coordinate of $R_{\nu} \cap S$.
\item (Mean) Choose the mean of the real part of the first coordinate of $R_{\nu} \cap S$.
\item (Random) Choose a uniform random point in $R_{\nu} \cap S$.
\end{enumerate}
The \textit{Median} method is used in many algorithms \cite{deBerg2008} as it guarantees that the children represent subsets of $S$ of almost equal sizes at each step, inducing a tree which is \textit{balanced}. The \textit{Mean} method will produce an approximately balanced tree provided the points $S$ are spatially well-behaved. Similarly, for large $d$, the \textit{Random} method will produce an approximately balanced tree. For ease of analysis, we henceforth assume that any of these methods produces a balanced BSP tree. In particular, we assume that there are $m\approx d/k$ leaves, each indexing a region with $\approx k$ points of $S$. Thus, the tree has height $\log_2(m)$ and computing the leaf region containing some $\z \in \mathbb{C}^n$ costs $\log_2(m)$ evaluations of $<$.

\section{Implementing BSP trees over iterators}
A \textit{solution iterator} $\mathcal I$ is a data structure which represents the $d$ solutions $S\subseteq \mathbb{C}^n$ to a polynomial system. It holds a single solution $s_i$  in memory and provides the user with access to a $\texttt{next}$ function that exchanges the solution $s_i$ in memory with $s_{i+1}$. In  \cite{HomotopyIterators} and \cite{MonodromyCoordinates}, the $\texttt{next}$ function requires some amount of numerical \textit{path-tracking}, the core algorithm of numerical algebraic geometry \cite{NAG}. Path-tracking is much more expensive than accessing a point in a list.

Algorithm \ref{alg:buildTree} can be performed using an iterator $\mathcal I$ for $S$ as input, instead of the solution set $S$ itself. Storing the real part of the first coordinates of the splits on each of the $m-1$ internal nodes of the tree allows one to query which leaf region contains a given point. Finding the elements $R_{\nu} \cap S$ in a region at depth $\delta$ in step~\texttt{3} requires $d$ \texttt{next} calls and $d\delta$ evaluations of $<$. Doing this for all $2m-2$ non-root regions requires $2dm-2d$ \texttt{next} calls and $\sum_{\delta=1}^{\log_2(m)} 2^{\delta}d\delta=2d(m\log_2(m)-m+1)$ many $<$-evaluations.

 Algorithm \ref{alg:buildTree} is iterative and so many of these $<$-evaluations are recomputed several times. As such, we may choose to cache the $<$-evaluation results in a \texttt{bitmask} $b_\nu$ of length $R_{\nu} \cap S$ so that the coordinate $(b_{\nu})_j$ agrees with the boolean value $\z_\nu<(R_{\nu} \cap S)_j$ where $\z_\nu$ is the split of $\nu$.  See the right of Figure \ref{fig:partition1} for an example of such a bitmasked BSP tree.

When bitmasking the BSP tree, all of the bitmasks at depth $\delta$ may be constructed using a single pass of $\mathcal I$. Doing so costs one \texttt{next}-call, $\delta$-many bit lookups, and one $<$-evaluation for each of the $d$ solutions. Summing over the $\log_2(m)$-many levels constructed in Algorithm \ref{alg:buildTree} proves the following theorem.

\begin{theorem}\label{thm:bspcomplexity}
The time and space required to construct a balanced BSP tree via Algorithm \ref{alg:buildTree} is given in the following table.
\begin{center}
\begin{tabular}[!htpb]{|l||c||c|c|c|}
\hline
 & Space & \multicolumn{3}{c|}{Time} \\  \hline \hline 
 &  & Bit lookups & $<$ & \texttt{next}  \\ \hline 
Bitmasked & $d\log_2(m)$ & $O(d\log_2(m)^3)$
& $d\log_2(m)$ & $d\log_2(m)$ \\
Not Bitmasked\,\, &\,\, $64(m-1)$ \,\,& $0$ & \,\,$O(dm\log_2(m))$ \,\,& $O(dm)$ \\
\hline
\end{tabular}
\end{center}
\end{theorem}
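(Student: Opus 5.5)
We count resources level by level in the balanced tree produced by Algorithm \ref{alg:buildTree}. By the balance assumption the tree has $m$ leaves, $m-1$ internal nodes and height $\log_2(m)$. Level $\delta$ holds $2^\delta$ nodes whose regions partition $S$ into parts of size about $d/2^\delta$. Each row of the table is one accounting over $\delta=0,\dots,\log_2(m)-1$. The two constructions differ only in what is stored and reused between levels.

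\emph{Not bitmasked.} The only persistent data are the real parts of the first coordinates of the $m-1$ splits, one 64-bit float each, so the space is $64(m-1)$. No bits are cached, so there are $0$ bit lookups. To choose the split of a node $\nu$ at depth $\delta$ (Median, Mean or Random), we need the points of $R_\nu\cap S$. We get them by one full pass of $\mathcal I$: $d$ \texttt{next}-calls, and for each solution we descend from the root, using at most $\delta$ evaluations of $<$. Summed over all non-root regions, as in the preceding paragraph, this gives $2dm-2d=O(dm)$ \texttt{next}-calls and
\[
\sum_{\delta=1}^{\log_2(m)}2^\delta d\delta=2d(m\log_2(m)-m+1)=O(dm\log_2(m))
\]
evaluations of $<$. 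One detail needs checking: computing the split (for instance a median) needs the values $\mathrm{Re}((R_\nu\cap S)_1)$. These can be streamed during the pass, or held transiently in $O(|R_\nu\cap S|)$ working memory. Either way they are not part of the stored tree, and we would state this explicitly.

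\emph{Bitmasked.} The stored data are the bitmasks. At each depth $\delta$ the regions partition $S$, so the masks $b_\nu$ at that level have total length $d$. Over $\log_2(m)$ levels the space is $d\log_2(m)$ bits. For time, all masks at depth $\delta$ are built in a single pass of $\mathcal I$. Each solution $s_j$ costs one \texttt{next}-call and one comparison $\z_\nu<s_j$ against the split of its own region. The region is located from earlier levels' bits rather than by comparisons. Summing over levels gives $d\log_2(m)$ \texttt{next}-calls and $d\log_2(m)$ evaluations of $<$.

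The main obstacle is the bit-lookup count $O(d\log_2(m)^3)$. To route $s_j$ at depth $\delta$ we read one bit per ancestor level, giving $\delta$ lookups. But to read the right bit in $b_\nu$ we need the index of $s_j$ within $R_\nu\cap S$, which is a rank computation over the parent's mask. Naively this costs far more than one operation. I would assume a rank operation costing $O(\delta)$ lookups per level, for example by maintaining running counters along the current descent path while iterating in order. Then locating $s_j$ at depth $\delta$ costs $O(\delta^2)$ lookups. The total is $\sum_{\delta\le\log_2(m)} d\,O(\delta^2)=O(d\log_2(m)^3)$. Making the per-level indexing cost precise, with counters kept per node of the current level so that one in-order pass suffices, is the step I would verify most carefully. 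If it is only $O(\delta)$, the bound still holds as an overestimate.
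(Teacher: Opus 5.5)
Your proposal is correct and uses essentially the same level-by-level accounting as the paper. Without bitmasks, each non-root region costs one pass of $d$ \texttt{next}-calls and at most $d\delta$ comparisons. With bitmasks, each level costs one pass, with one \texttt{next}-call, one $<$-evaluation and $\delta$ bit lookups per solution.

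The only divergence is the indexing (rank) issue. The paper simply charges $\delta$ lookups per solution at depth $\delta$, which sums to $O(d\log_2(m)^2)$ and so sits inside the stated $O(d\log_2(m)^3)$. If you keep a position counter for every node at depth less than $\delta$, not just for the current level, indexing into each $b_\nu$ costs no bit lookups. Your extra $O(\delta)$-per-level rank charge is therefore unnecessary, though harmless as an overestimate.
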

Equipped with bitmasks on each internal node of the BSP tree, one may construct an iterator $\mathcal I_{\ell}$ for each leaf $\ell$ whose \texttt{next} function skips over the solutions not in that leaf. This can often be performed without the need of several  \texttt{next} calls of $\mathcal I$, as is the case for homotopy iterators \cite{HomotopyIterators}. 
\begin{corollary}
\label{cor:collectioncosts}
Let $T$ be a balanced bitmasked BSP tree. The space and time complexity to collect points in one leaf region is given in the following table. 
\begin{center}
\begin{tabular}[!htpb]{|l||c||c|c|c|}
\hline
 & Space & \multicolumn{3}{c|}{Time} \\  \hline \hline 
 &  & Bit lookups & $<$ & \texttt{next}  \\ \hline 
Bitmasked & $64(2n)k$ & $d\log_2(m)$ & $0$ & $k$ \\
Not Bitmasked\,\, &\,\, $64(2n)k$ \,\,& $0$ & \,\,$d\log_2(m)$ \,\,& $d$ \\
\hline
\end{tabular}
\end{center}
\end{corollary}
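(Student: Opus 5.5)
The plan is to prove Corollary~\ref{cor:collectioncosts} by fixing a leaf $\ell$ at depth $\log_2(m)$ and counting operations during a single pass of the iterator $\mathcal I$ that collects $R_\ell \cap S$. I treat the two rows separately, using the tree built in Theorem~\ref{thm:bspcomplexity}. For space, I use the balancedness assumption, $|R_\ell\cap S|\approx k$. The collected points are stored as elements of $\mathbb{C}^n \cong \mathbb{R}^{2n}$ in 64-bit floating point, which gives $64(2n)k$ bits. This holds in both cases. The bitmasks or split values held by the tree itself are already counted in Theorem~\ref{thm:bspcomplexity}, so they are not counted again here.

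\emph{Not bitmasked.} I run through all $d$ solutions with $d$ calls to \texttt{next}. For each solution $s$, I descend the root-to-$\ell$ path and compare $s$ with the stored split at each node. This uses at most $\log_2(m)$ evaluations of $<$ per point, hence at most $d\log_2(m)$ in total. I keep $s$ exactly when every comparison agrees with the path to $\ell$. No bit lookups are needed.

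\emph{Bitmasked.} The key step is to show that membership of the $j$-th solution in $R_\ell$ can be decided from the bitmasks alone. Let $\nu_0,\dots,\nu_{\log_2(m)}=\ell$ be the path from the root to $\ell$. For each $j$, I track its index inside $R_{\nu_t}\cap S$ while descending the path. This index is obtained by counting the bits of $b_{\nu_{t-1}}$ before position $j$ that agree with the branch taken. I read off $b_{\nu_t}$ at that index and continue only if the bit agrees with the branch towards $\nu_{t+1}$. Done as one sweep over all $d$ indices, with a running pointer into each of the $\log_2(m)$ bitmasks, this costs at most $d\log_2(m)$ lookups and no $<$-evaluations.

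This yields the index set $J_\ell$ of the solutions lying in $\ell$, with $|J_\ell|\approx k$. The leaf iterator $\mathcal I_\ell$ then only needs to materialize these $k$ solutions. For homotopy iterators this can be done by tracking exactly the paths indexed by $J_\ell$, which amounts to $k$ \texttt{next}-calls.

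I expect the main obstacle to be justifying this last count, namely that skipping the solutions not in $\ell$ costs nothing in terms of \texttt{next}. The count holds only if the underlying iterator allows direct access to the $j$-th solution, as homotopy iterators do \cite{HomotopyIterators}. I will state this as an assumption on $\mathcal I$, in line with the remark preceding the corollary. I also need to keep the rank bookkeeping linear per level, so that the lookup count does not pick up an extra logarithmic factor.
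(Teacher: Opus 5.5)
Your proposal is correct and is the argument the paper sketches in the paragraph before the corollary. The paper counts $k$ stored points of $2n$ 64-bit floats, one root-to-leaf check per solution costing $\log_2(m)$ bit lookups or $<$-evaluations, and $k$ versus $d$ \texttt{next} calls depending on whether the leaf iterator $\mathcal I_\ell$ can skip solutions, which the paper also assumes implicitly by citing homotopy iterators. Your running-pointer rank bookkeeping handles a detail the paper glosses over, namely that each $b_\nu$ is indexed by position within $R_\nu\cap S$, and it actually gives the sharper count $\sum_t |R_{\nu_t}\cap S|\approx 2d$ lookups.
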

\begin{remark}
One may hope to improve upon the time or space complexity of our tree structure by evaluating a function which is not binary, which would partition the solution space into many parts via one evaluation. For example, one may use a many-valued function, like  partitioning the solutions into the $4^{2n}$ quadrants of $\mathbb{R}^{2n} \cong \mathbb{C}^n$. This saves no memory as caching the results requires moving beyond bit-vectors in such a way that precisely cancels out the memory benefits of storing fewer function caches. See Figure \ref{fig:equivalentmemory} for an example. 
Another drawback is that one expects to produce a less balanced tree this way.

On the other hand, this approach can save time, since the number of \texttt{next} calls is proportional to the height of the tree. One strategy is to choose splits, \textit{a priori} unrelated to $S$, in hopes that they partition $S$ into sufficiently small parts in order to bypass the iterative complexity of Algorithm \ref{alg:buildTree}. 
\end{remark} 
\vspace{-10pt}
\begin{figure}[!htpb]
\begin{center}
\begin{tikzpicture}[font=\small]
\begin{scope}[xshift=0cm]
  \draw[thick] (1.5,0) -- (1.5,0); 
 

  \node at (-1.5, 1.2)  {$A$};
  \node at (-.65, 1.2)  {$B$};
  \node at (0.1, 1.2)  {$C$};
  \node at (1.3, 1.2)  {$D$};
  \draw[line width=1pt, black] (-1.7,0) -- ( 1.7,0);
 
  \foreach \x in {-1.0, -0.3, 0.7}{
    \draw[dashed] (\x,-1.3) -- (\x, 1.3);
  }
 
  \filldraw[red] (-1.0,  0) circle (2pt);
  \filldraw[red] (-0.3, -0) circle (2pt);
  \filldraw[red] (0.7, -0) circle (2pt);

  \filldraw[black] (-1.5,  0.7) circle (2pt);
  \filldraw[black] (-1.5, -0.7) circle (2pt);
 
  \filldraw[black] (-0.8,  0.45) circle (2pt);
  \filldraw[black] (-0.8, -0.45) circle (2pt);
 
  \filldraw[black] ( 0.3,  1.0) circle (2pt);
  \filldraw[black] ( 0.3, -1.0) circle (2pt);
  \filldraw[black] ( 0.1, 0) circle (2pt);
 
  \filldraw[black] ( 1.3,  0.5) circle (2pt);
  \filldraw[black] ( 1.3, -0.5) circle (2pt);
 
  \filldraw[black] ( 1.65, 0) circle (2pt); 
\end{scope}
 
 
\begin{scope}[xshift=4cm, yshift=-1.2cm]
 
  \newcommand{\lbl}[3]{
    \node at (#1, 2.2) {#2};
    \node at (#1, 1.85) {\tiny #3};
  }
 
  \node at (-1.8, 2.2)  {$A$};  \node at (-1.8, 1.85) {\tiny 00};
  \node at (-1.4, 2.2)  {$A$};  \node at (-1.4, 1.85) {\tiny 00};
  \node at (-1.0, 2.2)  {$B$};  \node at (-1.0, 1.85) {\tiny 01};
  \node at (-0.6, 2.2)  {$B$};  \node at (-0.6, 1.85) {\tiny 01};
  \node at (-0.2, 2.2)  {$C$};  \node at (-0.2, 1.85) {\tiny 10};
  \node at ( 0.2, 2.2)  {$C$};  \node at ( 0.2, 1.85) {\tiny 10};
  \node at ( 0.6, 2.2)  {$C$};  \node at ( 0.6, 1.85) {\tiny 10};
  \node at ( 1.0, 2.2)  {$D$};  \node at ( 1.0, 1.85) {\tiny 11};
  \node at ( 1.4, 2.2)  {$D$};  \node at ( 1.4, 1.85) {\tiny 11};
  \node at ( 1.8, 2.2)  {$D$};  \node at ( 1.8, 1.85) {\tiny 11};
 
   \node[inner sep=0pt] (root4) at (0.0, 1.6) {}
    child[grow=225, level distance=2.2cm, line width=1pt,
          edge from parent/.style={draw, line width=1pt, black}] { node[inner sep=0pt] {$A$} }
    child[grow=255, level distance=1.65cm, line width=1pt,
          edge from parent/.style={draw, line width=1pt, black}]            { node[inner sep=0pt] {$B$} }
    child[grow=285, level distance=1.65cm, line width=1pt,
          edge from parent/.style={draw, line width=1pt, black}]           { node[inner sep=0pt] {$C$} }
    child[grow=315, level distance=2.2cm, line width=1pt,
          edge from parent/.style={draw, line width=1pt, black}] { node[inner sep=0pt] {$D$} }
  ;
 
\end{scope}
 
 
 
\begin{scope}[xshift=7.0cm, yshift=-0.67cm]
 
  \node  (rootB) at (1.0, 1.5) {\tiny 0000111111}
    [grow=down,
     every child/.style={line width=1pt, edge from parent/.style={draw, line width=1pt}},
     level 1/.style={sibling distance=2.0cm, level distance=1.0cm},
     level 2/.style={sibling distance=0.9cm, level distance=1.0cm}]
    child { node (leftB) {\tiny 0011}
      child { node  {$A$} }
      child { node  {$B$} }
    }
    child { node  (rightB) {\tiny 000111}
      child { node  {$C$} }
      child { node  {$D$} }
    }
  ;
\end{scope}
\end{tikzpicture}
\end{center}
\caption{(Left) 10 black points partitioned via the four regions created by three red points. (Middle) The $4$-ary tree with a $4$-ary vector of length $10$ indicating the regions of each point, requiring $10\cdot \log_2(4) = 20$ bits. (Right) The BSP tree obtained by partitioning via the $3$ splits, in order, decorated with bit vectors also requiring $20$ bits. }
\label{fig:equivalentmemory}
\end{figure}
\begin{remark}
Using iterators, the \textit{Median} method for choosing splits in Algorithm~\ref{alg:buildTree} seems infeasible, but the \textit{Mean} method can be performed easily by accumulating the sum of the points in each leaf at each depth during the single iteration over $\mathcal I$. This is one of the observations in \cite{MonodromyCoordinates}, namely that the trace of a solution set may be computed using an iterator with low-memory cost. 
\end{remark}

\section{Certifying solution iterators}	
We arrive at the main idea for certifying solution candidates $S \subseteq \mathbb{C}^n$, to a polynomial system $F$, which are represented by an iterator: using a heuristically balanced BSP tree, certify the elements of $S$ one leaf region at a time. This approach incurs the additional step of verifying that the certificate regions are contained in the interiors of the correct leaf regions. Performing this verification guarantees that the true roots associated to the candidate solutions in one region cannot coincide with the roots associated to a candidate solution in another region.  For the Krawczyk method, this region is the interval box $I$ surrounding the candidate $s$, and for $\alpha$-certification, this region is the ball of radius $2\beta(F,s)$ centered at $s$. We make this precise in the following algorithm.

\begin{algorithm}\label{alg:iteratorcertify}
\caption{Certify solution iterator}
\SetKwInOut{Input}{Input}
\SetKwInOut{Output}{Output}
\Input{A solution iterator $\mathcal I$ for $S$\\
A balanced BSP tree $T$ with respect to $S$}
\Output{A number of distinct certified complex and real numerical solutions}
Initialize $N(\mathbb{C}) = 0$ and $N(\mathbb{R})=0$\\
\For{each leaf $\ell$ in $T$}{
Collect $R_{\ell} \cap S$ in memory. \\
Certify the number $N_{\ell}(\mathbb{C})$ (resp. $N_{\ell}(\mathbb{R})$) of distinct complex (resp. real) candidate solutions in $R_{\ell}\cap S$ using $\alpha$-theory or the Krawczyk method whose certificate regions are contained in $R_{\ell}$.\\
Increment $N(\mathbb{C}) += N_{\ell}(\mathbb{C})$ and $N(\mathbb{R}) += N_\ell(\mathbb{R}))$. 
} 
\Return{$N(\mathbb{C})$ and $N(\mathbb{R})$.}
\end{algorithm}

\begin{theorem}
\label{thm:complexity_certify_IT}
The space and time complexity of Algorithm \ref{alg:iteratorcertify} is  given in the following table.

\begin{center}
\begin{tabular}{{|l||l||c|c|c|c|}}
\hline
 & Space & \multicolumn{4}{c|}{Time} \\ \hline \hline
With bitmasked $T$ &  & Bit lookups & $<$  & \texttt{next} & \texttt{certify} \\ \hline 
Krawczyk & $384nk$ & \multirow{2}{*}{$dm\log_2(m)$} & \multirow{2}{*}{$2d$} &  \multirow{2}{*}{$d$}&\multirow{2}{*}{$d$}\\
$\alpha$-certification & $384nk+128k$ & & & &\\
\hline
\hline
No Bitmasking & & Bit lookups & $<$ & \texttt{next} & \texttt{certify} \\ \hline \hline
Krawczyk & $384nk$ & \multirow{2}{*}{$0$}& \multirow{2}{*}{$dm\log_2(m)$} &\multirow{2}{*}{$dm$} & \multirow{2}{*}{$d$} \\
$\alpha$-certification & $384nk+128k$ & & & &  \\
\hline
\end{tabular}
\end{center}
\end{theorem}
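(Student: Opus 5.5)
We prove the theorem by charging the work of Algorithm \ref{alg:iteratorcertify} step by step to the leaves of $T$, using Corollary \ref{cor:collectioncosts} for step \texttt{3}, and then summing over the $m$ leaves.

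\textbf{Space.} At any moment the algorithm holds only one part $R_\ell\cap S$, which has at most $k$ points. Each point of $\mathbb{C}^n$ takes $2n$ double-precision floats, i.e.\ $128n$ bits.

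For the Krawczyk method, each candidate is replaced by its interval box $I\subseteq\mathbb{R}^{2n}$. Storing the box as a midpoint and a radius per real coordinate, together with the candidate itself, gives $3\cdot 64\cdot 2n=384n$ bits per candidate, so $384nk$ in total.

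For $\alpha$-certification we store the candidate, and additionally the scalar $\beta(F,s)$ and the real interval needed for the containment test, which is two more $64$-bit numbers. This adds $128$ bits per candidate, i.e.\ an extra $128k$. Counters and the current leaf index contribute only lower-order terms. The tree $T$ itself is treated as input, so it is not charged here; its cost is the one in Theorem \ref{thm:bspcomplexity}.

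\textbf{Collection (step \texttt{3}).} This step is costed by Corollary \ref{cor:collectioncosts} applied to each of the $m$ leaves.
\begin{itemize}
\item \emph{With bitmasks:} each leaf costs $d\log_2(m)$ bit lookups and $k$ \texttt{next} calls. Summing over the leaves gives $dm\log_2(m)$ bit lookups and $mk\approx d$ \texttt{next} calls.
\item \emph{Without bitmasks:} each leaf costs $d$ \texttt{next} calls and $d\log_2(m)$ evaluations of $<$. Summing gives $dm$ \texttt{next} calls and $dm\log_2(m)$ evaluations of $<$.
\end{itemize}

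\textbf{Certification (step \texttt{4}).} Since the leaves partition $S$, every candidate is certified exactly once, which accounts for the $d$ \texttt{certify} calls in both rows.

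The containment test "certificate region $\subseteq R_\ell$" is the step I expect to be the main obstacle. I would first argue that because $<$ depends only on $\textrm{Re}(z_1)$, every $R_\ell$ is a slab $\{a_\ell\le \textrm{Re}(z_1)<b_\ell\}$. Its two ends are given by the nearest ancestor splits on the left and right, shifted by $\varepsilon$. Containment of a box or ball in such a slab therefore reduces to two comparisons: the lower and upper endpoints of the projection of the region onto $\textrm{Re}(z_1)$, checked against $a_\ell$ and $b_\ell$. This gives $2$ evaluations of $<$ per candidate and $2d$ overall, as in the bitmasked row.

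It remains to justify that $a_\ell,b_\ell$ cost no further $<$-evaluations. I would store them once per leaf when the leaf is entered; in the bitmasked case they are read off along the root-to-leaf path already traversed. In the unbitmasked row these $2d$ comparisons are absorbed into $dm\log_2(m)$.

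Finally I would check correctness, so that the counts are meaningful. By the $\varepsilon$-shift, split points lie in the interior of their own halfspace. Once each certified region is shown to lie inside $R_\ell$, roots from different leaves are distinct, since the leaf regions are disjoint. Distinctness within a leaf is then handled by the certification routine, at cost already included in \texttt{certify}.
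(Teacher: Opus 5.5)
Your proposal is correct and follows the paper's proof. The space count is $6n$ 64-bit reals per held point (candidate plus interval box), giving $384nk$, plus two more reals per point for $\beta$, giving the extra $128k$. The time counts are the per-leaf costs of Corollary~\ref{cor:collectioncosts} multiplied by $m$ with $mk=d$, plus two $<$-evaluations per containment check and one \texttt{certify} per candidate. Your slab argument just spells out the paper's one-line claim that each check costs two comparisons.

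One step needs the paper's justification. For $\alpha$-certification you write ``we store the candidate'', which accounts for only $128n$ bits per point. The $384n$ baseline there comes from the fact that $\alpha$-certification is run with interval arithmetic, so it also holds an interval box. The two extra reals are $\beta$ itself stored as an interval, not ``$\beta$ and an interval''.

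A minor aside: the slab ends are the max/min over all ancestor thresholds, not always the nearest ancestors' thresholds. A shifted threshold $\textrm{Re}(z_1)-\varepsilon$ can fall below the current lower end. This does not affect the count.
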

\begin{proof}
Both the Krawczyk method and $\alpha$-certification (with interval arithmetic) require an interval box for $k$ solutions upon collection. The interval box involves $2n$-many complex numbers, and the solution uses $n$ complex numbers. Since a complex float costs $128$ bits, and we are collecting at most $k$ solutions, the space complexity follows for the Krawczyk method. For $\alpha$-certification, we require another $2k$ real numbers for the $k$-many values of $\beta$, represented via  intervals. The time complexities follow from Corollary \ref{cor:collectioncosts}, multiplied by the number $m$ of leaves (note that $mk=d$) along with the observation that certification region checks cost two $<$-evaluations each. For the number of \texttt{certify} calls, clearly, each solution must be certified once. \hfill $\square$
\end{proof}

\begin{algorithm}\label{alg:mainalgorithm}
\caption{Main Algorithm}
\SetKwInOut{Input}{Input}
\SetKwInOut{Output}{Output}
\Input{A solution iterator $\mathcal I$ for the $d$ candidate approximate solutions $S$ of a square polynomial system $F$ defined over $\mathbb{Q}$. \\
A maximum part size $k$}
\Output{A number of distinct certified complex and real numerical solutions}
Construct a BSP tree  $T$ for $\mathcal I$ using Algorithm \ref{alg:buildTree} with some tolerance $\varepsilon$.\\
\Return{Algorithm \ref{alg:iteratorcertify} called on $(\mathcal I,T)$.}
\end{algorithm}

\begin{theorem}
\label{thm:memorycost}
The cost of Algorithm~\ref{alg:mainalgorithm} is given by Table \ref{tab:complexitysummary}.
\end{theorem}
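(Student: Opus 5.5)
The plan is to treat Algorithm~\ref{alg:mainalgorithm} as the sequential composition of Algorithm~\ref{alg:buildTree} and Algorithm~\ref{alg:iteratorcertify}. Its cost is then obtained by combining Theorem~\ref{thm:bspcomplexity} with Theorem~\ref{thm:complexity_certify_IT}, row by row and column by column. The two phases run one after the other and share only the tree $T$. So time costs add. For space, the tree $T$ persists throughout the certification phase, while the collected leaf data is freed after each leaf. The peak space is therefore the tree storage plus the per-leaf working memory.

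\textbf{Space.} In the bitmasked case, Theorem~\ref{thm:bspcomplexity} gives $d\log_2(m)$ bits for the tree. Theorem~\ref{thm:complexity_certify_IT} gives $384nk$ bits for Krawczyk and $384nk+128k$ bits for $\alpha$-certification. Adding these yields the first two Space entries of Table~\ref{tab:complexitysummary}. Without bitmasks, the tree stores $m-1$ floats of $64$ bits, giving $64(m-1)+384nk$ and $64(m-1)+384nk+128k$. One should remark that the iterator itself holds a single solution, which is absorbed into these bounds.

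\textbf{Time.} Each column is summed separately.

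With bitmasks:
\begin{itemize}
\item \emph{Bit lookups:} $O(d\log_2(m)^3)+dm\log_2(m)=O(dm\log(m)+d\log(m)^3)$.
\item \emph{$<$-evaluations:} $d\log_2(m)+2d=O(d\log(m))$.
\item \texttt{next} \emph{calls:} $d\log_2(m)+d=O(d\log(m))$.
\end{itemize}

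Without bitmasks:
\begin{itemize}
\item \emph{Bit lookups:} $0$.
\item \emph{$<$-evaluations:} $O(dm\log_2(m))+dm\log_2(m)=O(dm\log(m))$.
\item \texttt{next} \emph{calls:} $O(dm)+dm=O(dm)$.
\end{itemize}

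The \texttt{certify} count $d$ is omitted from Table~\ref{tab:complexitysummary}, which measures time only in evaluations of the listed primitives. That omission should be noted explicitly.

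\textbf{Main obstacle.} The only delicate point is the bookkeeping for the combined $<$ and bit-lookup entries. Two things must be checked. First, the lower-order terms, such as $2d$ for the region containment checks in step \texttt{4} of Algorithm~\ref{alg:iteratorcertify}, are dominated. Second, the two complexity tables are stated under the same balancedness assumption ($m\approx d/k$, height $\log_2 m$), so they can legitimately be added. I would also verify that choosing splits shifted by $\varepsilon$ does not change the count of $<$-evaluations: the shift is a constant offset in the stored real coordinate, so it does not.
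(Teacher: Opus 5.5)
Your proposal is correct and follows the paper's own proof, which notes that Algorithm~\ref{alg:mainalgorithm} runs Algorithm~\ref{alg:buildTree} and then Algorithm~\ref{alg:iteratorcertify}, and therefore adds the tables of Theorem~\ref{thm:bspcomplexity} and Theorem~\ref{thm:complexity_certify_IT} entrywise. Your column-by-column sums, and your remark that the \texttt{certify} column does not appear in Table~\ref{tab:complexitysummary}, simply make that bookkeeping explicit.
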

\begin{proof}
Table \ref{tab:complexitysummary} tabulates the sum of the complexity of its two subroutines: building the tree (Algorithm \ref{alg:buildTree}) and certifying an iterator given a tree (Algorithm \ref{alg:iteratorcertify}). Thus, it is the sum of the tables given in Theorem \ref{thm:bspcomplexity} and Theorem \ref{thm:complexity_certify_IT}. \hfill $\square$
\end{proof}

Often, one has \textit{a priori} knowledge of $d$ and $n$ when trying to certify a large polynomial system. A natural task is to find the $k^*=dm^*$ which minimizes the space requirements of Algorithm \ref{alg:mainalgorithm}. The next result follows from basic calculus. 

\begin{theorem}
The memory cost of Algorithm \ref{alg:mainalgorithm}, using bitmasks and the Krawczyk method for certification, is minimized at $m^* = 384n\ln(2)$ to be $d\log_2(384n\ln(2))$ $+d\log_2(e)$.  Without bitmasks, the memory cost is minimized at $m^* = \sqrt{d/6n}$ to be  $128\sqrt{6nd}-64$.
\end{theorem}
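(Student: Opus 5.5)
The plan is to read the memory costs off Table \ref{tab:complexitysummary} (equivalently, the sum of the space columns of Theorem \ref{thm:bspcomplexity} and Theorem \ref{thm:complexity_certify_IT}, as in Theorem \ref{thm:memorycost}). We then eliminate $k$ via the balance assumption $mk=d$, i.e. $k=d/m$, and minimize the resulting one-variable function of $m$ over $m>0$, treating $m$ as a continuous variable. This gives two cost functions:
\[
f(m)=d\log_2(m)+\frac{384nd}{m}, \qquad g(m)=64(m-1)+\frac{384nd}{m},
\]
for the bitmasked and non-bitmasked Krawczyk versions respectively.

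For $f$, I would differentiate to get $f'(m)=\frac{d}{m\ln 2}-\frac{384nd}{m^2}$. Its unique positive zero is $m^*=384n\ln(2)$. For global minimality on $(0,\infty)$, observe that $f'(m)<0$ for $m<m^*$ and $f'(m)>0$ for $m>m^*$; this sign change is immediate, since $m^2 f'(m)/d=m/\ln 2-384n$. Substituting back, the second term becomes $384nd/(384n\ln 2)=d/\ln 2=d\log_2(e)$. This yields the claimed value $d\log_2(384n\ln 2)+d\log_2(e)$.

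For $g$, we have $g'(m)=64-\frac{384nd}{m^2}$, which vanishes when $m^2=6nd$. Moreover $g''(m)=768nd/m^3>0$, so $g$ is strictly convex on $(0,\infty)$ and this critical point is the global minimizer. At the critical point, $384nd/m=64\cdot 6nd/\sqrt{6nd}=64\sqrt{6nd}$. Hence the minimum is $64\sqrt{6nd}-64+64\sqrt{6nd}=128\sqrt{6nd}-64$, which matches the stated value.

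The only real obstacle is bookkeeping consistency with the statement. The calculus forces $m^*=\sqrt{6nd}$ rather than the printed $\sqrt{d/6n}$. Indeed, plugging $\sqrt{d/6n}$ into $g$ does not give $128\sqrt{6nd}-64$, whereas $\sqrt{6nd}$ does. So I would present the minimizer as $\sqrt{6nd}$ and remark that this is what produces the stated minimum value. Similarly, the optimal part size is $k^*=d/m^*$, not $dm^*$. Finally, I would note that integrality of $m$ (and $m$ being a power of two for a balanced tree) is ignored, consistent with the paper's heuristic treatment of balance.
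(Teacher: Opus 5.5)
Your proposal is correct and follows the paper's own route: the paper simply cites ``basic calculus'' applied to the space costs of Table~\ref{tab:complexitysummary} with $k=d/m$, and your minimizations of $d\log_2(m)+384nd/m$ and $64(m-1)+384nd/m$ carry this out correctly. Your correction is also right: the printed $\sqrt{d/6n}$ is the optimal part size $k^*=d/m^*$, the minimizer in $m$ is $m^*=\sqrt{6nd}$, and $k^*=dm^*$ should read $k^*=d/m^*$, which agrees with the paper's Example~\ref{ex:example}, where $n=10$ gives $k^*\approx 3333\approx\sqrt{d/60}$.
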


\begin{example}
\label{ex:example}
There are $d = 666,841,088$ quadric surfaces tangent to nine general quadrics in $\mathbb{C}^3$, each of which is identified by the $n=10$ coefficients of a degree two polynomial in three variables (see \cite{TangentQuadrics,Schubert}). Applying our technique for the Krawczyk method with bitmasking, using parts of size no larger than $k^* \approx 250,533$, 
has a memory cost of $\approx 0.9952$ 
gigabytes. One must call \texttt{next} $d\log_2(m^*) \approx 10,019,198,441$ times. 
Although the memory cost is only around $2.61$ megabytes without bitmasking, one must call \texttt{next} approximately 
$dm \approx 22,111,804,214,194$
times. 

Next, we approximate the memory cost of the parallel approach in Section 2. Computing an interval distance to a fixed reference point requires $2d$ many $64$-bit floats, and so we compute that $\frac{2\cdot 666841088 \cdot 64}{8\cdot 1024^3} \approx 9.93$ 
gigabytes are required.

Finally, we remark on the case in which we can predict splitting points of a balanced BSP tree for $S$ which partitions the set $S$ into parts of size at most $k$. The cost of certifying this way is dominated in time by $d$ calls to $\texttt{next}$ and \texttt{certify}, and uses only $384nk+64(m-1) = \frac{3840k^2-64k+42677829632}{k}$ bits of memory. This is minimized at $k^*\approx 3333$ to be approximately $3.05$ megabytes.
 \hfill $\diamond$

With an iterator version of certification in hand, the difficulty in executing large certification tasks now lies in the iterator construction. The current implementation of \textit{homotopy iterators}, as described in  \cite{HomotopyIterators}, uses the classical start systems. In the case of our example, the total degree system has too many start solutions to reliably use an iterator, and the polyhedral homotopy iterator requires a prohibitively expensive mixed volume computation. Monodromy coordinates, as described in \cite{MonodromyCoordinates}, could make the problem feasible, but have yet to be implemented.
\end{example}
A version of iterator certification, based on the ideas in this paper, has been implemented in the \texttt{julia} \cite{julia}  package  \texttt{HomotopyContinuation.jl} \cite{HC}.

\section*{Acknowledgements}
PB was supported by DFG, German Research Foundation – Projektnummer 445466444. 
TB and DKJ were supported by NSERC discovery grant RGPIN-2023-03551. The authors are grateful to Michael Burr for his suggestions regarding BSP trees.

\bibliographystyle{plain}   
\bibliography{mybib}   

\end{document}